%%%%%%%%%%%%%%%%%%%%%%%%%%%%%%%%%%%%%%%%%%%
\documentclass[10pt, reqno]{amsart}
\usepackage{amsmath, amsthm, amscd, amsfonts, amssymb, graphicx, color}
\usepackage[bookmarksnumbered, colorlinks, plainpages]{hyperref}
\hypersetup{colorlinks=true,linkcolor=red, anchorcolor=green, citecolor=cyan, urlcolor=red, filecolor=magenta, pdftoolbar=true}
\usepackage{mathrsfs}
%\textheight 22.5truecm \textwidth 14.5truecm
%\setlength{\oddsidemargin}{0.35in}\setlength{\evensidemargin}{0.35in}

%\setlength{\topmargin}{-.5cm}

\newtheorem{theorem}{Theorem}[section]
\newtheorem{lemma}[theorem]{Lemma}

\newtheorem{corollary}[theorem]{Corollary}
\theoremstyle{definition}
\newtheorem{definition}[theorem]{Definition}

\theoremstyle{remark}
\newtheorem{remark}[theorem]{Remark}
\numberwithin{equation}{section}

\begin{document}
\setcounter{page}{1}

\title[Galois cohomology revisited]{Galois cohomology revisited}

\author[Nikolaev]
{Igor V. ~Nikolaev$^1$}

\address{$^{1}$ Department of Mathematics and Computer Science, St.~John's University, 8000 Utopia Parkway,  
New York,  NY 11439, United States.}
\email{\textcolor[rgb]{0.00,0.00,0.84}{igor.v.nikolaev@gmail.com}}

%\dedicatory{In memory of Ola Bratteli}

\subjclass[2010]{Primary 11G35, 14A22; Secondary 46L85.}

\keywords{twists, Serre $C^*$-algebras.}

%\date{Received:  August 14, 2015; Revised: yyyyyy; Accepted: zzzzzz.}

\begin{abstract}
We recast the  Galois cohomology  of the  variety $V$  over a number field $k$ 
in terms of the  K-theory of a $C^*$-algebra  $\mathscr{A}_V$ connected to  $V$. 
 It is proved that $V$ is isomorphic to $V'$ over $k$ (algebraic closure of $k$, resp.) 
 if and only if  $\mathscr{A}_V$ is isomorphic  (Morita equivalent, resp.) to 
 $\mathscr{A}_{V'}$. 
 In particular, the Morita equivalent $C^*$-algebras  $\mathscr{A}_V$ parametrize 
 twists of the variety $V$. 
 The case of rational elliptic curves is considered in detail. 
  \end{abstract}

\maketitle

%**************************************************************************
\section{Introduction}
%***************************************************************************
Let $V$ be a complex projective variety given by a  homogeneous coordinate ring $\mathscr{A}$. 
If $k\subset\mathbf{C}$ is a subfield of complex numbers and $V(k)$ is the set of  $k$-points of $V$, 
then the isomorphisms of $V(k)$ over $\mathbf{C}$ cannot be defined over the field $k$
in general.   In such a case  the variety $V'(k)$ is called a  twist of $V(k)$. 
Equivalently,   $V'(k)$ is a twist,  if it is isomorphic to  $V(k)$  over  the field $\mathbf{C}$ but not 
over the smaller field $k$.   The twists of $V(k)$ are  classified  in terms of  the Galois cohomology   
 [Serre 1997]  \cite[p. 123]{S}.

 Recall that the Serre $C^*$-algebra  $\mathscr{A}_V$
 is defined as the norm closure of a self-adjoint representation of the twisted 
 homogeneous coordinate ring of  $V(k)$ by the bounded linear operators on a Hilbert space $\mathcal{H}$,
 see [Stafford \& van ~den ~Bergh 2001] \cite{StaVdb1} and \cite[Section 5.3.1]{N}. 
 The separable $C^*$-algebra $A$ is said to be   Morita equivalent (stably isomorphic) 
  to $A'$, if $A\otimes\mathscr{K}\cong A'\otimes\mathscr{K}$, where $\mathscr{K}$ is the $C^*$-algebra
 of all compact operators  on $\mathcal{H}$ and $\cong$ is an isomorphism of the $C^*$-algebras
  [Blackadar 1986]  \cite[Section 13.7.1]{B}.
  (Notice that if $A\cong A'$ are isomorphic $C^*$-algebras, then $A$ is  Morita equivalent to $A'$.)
  The correspondence $V(k)\mapsto\mathscr{A}_V$ is  a functor,   which maps 
 $\mathbf{C}$-isomorphic varieties to the Morita equivalent 
  Serre $C^*$-algebras \cite[Theorem 5.3.3]{N}.

\medskip
The aim of our note is a classification of the twists  of  $V(k)$ in terms of   the Serre $C^*$-algebra
$\mathscr{A}_V$.   Namely, we prove that if the variety $V(k)$ is $k$-isomorphic to a variety  $V'(k)$, 
 then  the  Serre $C^*$-algebra $\mathscr{A}_V$ is isomorphic to 
  $\mathscr{A}_{V'}$,  while   if $V(k)$ is $\mathbf{C}$-isomorphic to $V'(k)$,
then the Serre $C^*$-algebra $\mathscr{A}_V$ is Morita equivalent to $\mathscr{A}_{V'}$,
see corollary \ref{cor1.2}.   To formalize  our results, we need  the following definitions.

\medskip
Let $A$ be  a unital $C^*$-algebra and denote by  $V(A)$ the union  of projections in all the $n\times n$
matrix $C^*$-algebra with entries in $A$  [Blackadar 1986]  \cite[Section 5]{B}.  Recall that projections 
$p,q\in V(A)$ are called equivalent,  if there exists a partial
isometry $u$ such that $p=u^*u$ and $q=uu^*$. The corresponding equivalence
class is denoted by $[p]$.   The equivalence classes of  projections can be made into
a semigroup  with addition defined by the formula $[p]+[q]=[p+q]$. The Grothendieck
completion of the semigroup to an abelian group, $K_0(A)$,  is called the  $K_0$-group  of the 
algebra $A$.
The functor $A\to K_0(A)$ maps the category of unital
$C^*$-algebras into the category of abelian groups, so that
projections in the algebra $\cup_{n=1}^{\infty} M_n(A)$ correspond to a positive
cone  $K_0^+(A)\subset K_0(A)$ and the unit element $1\in A$
corresponds to an order unit $u\in K_0^+(A)$.
Scaled ordered group   $(K_0(A),K_0^+(A),u)$  with a fixed
order unit $u$  tracks projections in the original algebra $A$,
while $(K_0(A), K_0^+(A))$ is known as  a dimension group
 [Blackadar 1986]  \cite[Section 6]{B}.

An  AF-algebra  $\mathscr{B}$ (Approximately Finite $C^*$-algebra) is 
 the  norm closure of an ascending sequence of  the direct sum of  finite-dimensional
$C^*$-algebras $M_n(\mathbf{C})$,  where  $M_n(\mathbf{C})$ is the $C^*$-algebra of the $n\times n$ matrices
with entries in $\mathbf{C}$ [Blackadar 1986]  \cite[Section 7.1]{B}.
The  scaled dimension group $(K_0(\mathscr{B}), K_0^+(\mathscr{B}), u)$ is an isomorphism
invariant of the algebra $\mathscr{B}$.  In contrast, the dimension group
$(K_0(\mathscr{B}), K_0^+(\mathscr{B}))$ is an invariant of the  Morita equivalence of the AF-algebra 
$\mathscr{B}$ [Blackadar 1986]  \cite[Section 7.3]{B}.
The Serre $C^*$-algebra $\mathscr{A}_V$ is not an AF-algebra, but 
there exists a dense embedding $\mathscr{A}_V\hookrightarrow\mathscr{B}$,
where $\mathscr{B}$ is an AF-algebra  such that 
$(K_0(\mathscr{A}_V), K_0^+(\mathscr{A}_V))\cong (K_0(\mathscr{B}), K_0^+(\mathscr{B}))$
\cite[Lemma 3.1]{Nik1}.

Let $H^1(Gal(\bar k|k), Aut_{\mathbf{C}}^{~ab}(V))$ be  the first  Galois cohomology group  
of the extension $k\subset\mathbf{C}$,  where  $Aut_{\mathbf{C}}^{~ab}(V)$ is the maximal abelian subgroup of the group of 
$\mathbf{C}$-automorphisms  of the variety  $V(k)$ [Serre 1997]  \cite[p. 123]{S}.   
 Our main results can be formulated as follows. 
%*************************************************************************
\begin{theorem}\label{thm1.1}
The group $H^1(Gal(\bar k|k), Aut_{\mathbf{C}}^{~ab}(V))$ has  structure of  
a dimension group of stationary type,  which is order-isomorphic to the group
$(K_0(\mathscr{A}_V), K_0^+(\mathscr{A}_V))$. 
\end{theorem}
%***************************************************************************
%*************************************************************************
\begin{corollary}\label{cor1.2}
Let $\mathscr{A}_V=F(V(k))$ and  $\mathscr{A}_{V'}=F(V'(k))$,
where $V(k)$ and $V'(k)$ are complex projective varieties over 
the field $k\subset\mathbf{C}$.  Then: 

\medskip
(i) $V(k)$ and $V'(k)$ are isomorphic over $k$  if and only if the Serre $C^*$-algebras
$\mathscr{A}_V\cong\mathscr{A}_{V'}$ are isomorphic; 

\smallskip
(ii)  $V(k)$ and $V'(k)$ are isomorphic over $\mathbf{C}$  if and only if the Serre  $C^*$-algebras
$\mathscr{A}_V$ and $\mathscr{A}_{V'}$ are Morita equivalent. 
\end{corollary}
%***************************************************************************
The article is organized as follows. In Section 2 we briefly review the Serre $C^*$-algebras and the Galois cohomology.  
Theorem \ref{thm1.1} and corollary \ref{cor1.2} are 
proved  in Section 3.  An illustration of corollary \ref{cor1.2} can be found in Section 4.

%**************************************************************************
\section{Preliminaries}
%***************************************************************************
In this section we briefly review  Galois cohomology and Serre 
$C^*$-algebras. We refer the reader to  [Serre 1997]  \cite[Chapter I, \S 5]{S} and \cite[Section 5.3.1]{N}
for a detailed account. 

%**************************************************************************
\subsection{Serre $C^*$-algebras}
%***************************************************************************
Let $V$ be an $n$-dimensional complex  projective variety endowed with an automorphism $\sigma:V\to V$  
 and denote by $B(V, \mathcal{L}, \sigma)$   its  twisted homogeneous coordinate ring  [Stafford \& van ~den ~Bergh 2001]  \cite{StaVdb1}.
Let $R$ be a commutative  graded ring,  such that $V=Proj~(R)$.  Denote by $R[t,t^{-1}; \sigma]$
the ring of skew Laurent polynomials defined by the commutation relation
$b^{\sigma}t=tb$  for all $b\in R$, where $b^{\sigma}$ is the image of $b$ under automorphism 
$\sigma$.  It is known, that $R[t,t^{-1}; \sigma]\cong B(V, \mathcal{L}, \sigma)$.

Let $\mathcal{H}$ be a Hilbert space and   $\mathcal{B}(\mathcal{H})$ the algebra of 
all  bounded linear  operators on  $\mathcal{H}$.
For a  ring of skew Laurent polynomials $R[t, t^{-1};  \sigma]$,  
 consider a ring homomorphism: 
%*************************************************************************
\begin{equation}\label{eq2.1}
\rho: R[t, t^{-1};  \sigma]\longrightarrow \mathcal{B}(\mathcal{H}). 
\end{equation}
%*************************************************************************
Recall  that  $\mathcal{B}(\mathcal{H})$ is endowed  with a $\ast$-involution;
the involution comes from the scalar product on the Hilbert space $\mathcal{H}$. 
We shall call representation (\ref{eq2.1})  $\ast$-coherent,   if
(i)  $\rho(t)$ and $\rho(t^{-1})$ are unitary operators,  such that
$\rho^*(t)=\rho(t^{-1})$ and 
(ii) for all $b\in R$ it holds $(\rho^*(b))^{\sigma(\rho)}=\rho^*(b^{\sigma})$, 
where $\sigma(\rho)$ is an automorphism of  $\rho(R)$  induced by $\sigma$. 
(Notice that the second condition says that $\sigma$  preserves $\ast$-involution.
Moreover, $\sigma(\rho)$ is implemented by a unitary operator on $\mathcal{H}$.)   
Whenever  $B=R[t, t^{-1};  \sigma]$  admits a $\ast$-coherent representation,
$\rho(B)$ is a $\ast$-algebra.  
%**************************************************
\begin{definition}
 The  Serre $C^*$-algebra $\mathscr{A}_V$  is  the  norm closure of  an $\ast$-coherent  representation 
 $\rho(B)$. 
 \end{definition}
 %*******************************************************
%********************************************************************
\begin{remark}
The $\ast$-coherent representation $\rho(B)$ gives rise to the
unique (canonical)  automorphism $\sigma$.  
Likewise, the Serre $C^*$-algebra is independent of the choice 
of $\ast$-coherent representation of  $\rho(B)$. 
Thus the $C^*$-algebra $\mathscr{A}_V$ is correctly defined. 
\end{remark}
%**********************************************************************

%**************************************************************************
\subsection{Galois cohomology}
%***************************************************************************
Let $G$ be a group. The set $\mathbf{A}$ is called a {\it $G$-set}, if 
$G$ acts on $\mathbf{A}$ on the left continuously. 
If $\mathbf{A}$ is a group and $G$ acts on $\mathbf{A}$ by the group
homomorphisms, then $\mathbf{A}$ is called a {\it $G$-group}. In particular,
if $\mathbf{A}$ is abelian, one gets a {\it $\mathbf{G}$-module.} 

If $\mathbf{A}$ is a $G$-group,  then a {\it 1-cocycle} of $G$ in $\mathbf{A}$ 
is a map $s\mapsto a_s$ of $G$ to $A$ which is continuous and such that
$a_{st}=a_s a_t$ for all $s,t\in G$.  The set of all 1-cocycles is denoted by
$Z^1(G, \mathbf{A})$.  Two cocycles $a$ and $a'$ are said to be {\it cohomologous}, 
 if there exists $b\in\mathbf{A}$ such that $a_s'=b^{-1}a_s b$. The quotient  of 
 $Z^1(G, \mathbf{A})$ by this equivalence relation is called the first 
 {\it cohomology set}  and is denoted by $H^1(G,\mathbf{A})$. 
The class of the unit cocycle is a distinguished element $\mathbf{1}$ in
the $H^1(G,\mathbf{A})$.  Notice that in general  there is no composition law on the 
set $H^1(G,\mathbf{A})$.
If $\mathbf{A}$ is an abelian group then the cohomology set $H^1(G,\mathbf{A})$
is an abelian group.

If $G$ is a profinite group, then 
%**************************************************************************
\begin{equation}\label{eq2.2}
H^1(G,\mathbf{A})=\varinjlim H^1(G/U, \mathbf{A}^U),
\end{equation}
%*************************************************************************** 
where $U$ runs through the set of open normal subgroups of $G$ and 
$\mathbf{A}^U$ is a subset of $\mathbf{A}$ fixed under action of $U$. 
The maps $H^1(G/U, \mathbf{A}^U)\to H^1(G,\mathbf{A})$ are injective. 

Let $k$ be a number field and $\bar k$ the algebraic closure of $k$. Denote by 
$Gal~(\bar k|k)$ the profinite Galois group of $\bar k$.  Let $V(k)$ be a projective 
variety over $k$ and $Aut ~V(k)$  the group of the $\bar k$-automorphisms
of $V(k)$. 
%*********************************************************************************
\begin{lemma}\label{lm2.1}
{\bf  [Serre 1997]  \cite[p. 124]{S}}
There exits a bijective correspondence between the twists of $V(k)$ and the set
$H^1(Gal~(\bar k|k), ~Aut ~V(k))$. 
\end{lemma}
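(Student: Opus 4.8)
The plan is to recover the classical Galois-descent description of forms by first cohomology, constructing mutually inverse maps between the two sides. Write $G:=Gal(\bar k|k)$, put $V_{\bar k}:=V(k)\times_{Spec\,k}Spec\,\bar k$, and for $s\in G$ let $s_*$ denote the $s$-semilinear automorphism of $V_{\bar k}$ induced by $s$ on $\bar k$; then $G$ acts on $A:=Aut\,V(k)$ by ${}^s\alpha:=s_*\circ\alpha\circ s_*^{-1}$, which is again a $\bar k$-automorphism. Given a twist $W$ of $V(k)$, choose (by definition of a twist) a $\bar k$-isomorphism $\varphi\colon W_{\bar k}\xrightarrow{\sim}V_{\bar k}$ and set $c_s:=\varphi\circ({}^s\varphi)^{-1}$, where ${}^s\varphi:=s_*\circ\varphi\circ(s_*^W)^{-1}$ and $s_*^W$ is the $s$-semilinear automorphism of $W_{\bar k}$. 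Each $c_s$ lies in $A$, and the relation ${}^{st}\varphi={}^s({}^t\varphi)$ yields, after a short computation, the $1$-cocycle identity $c_{st}=c_s\cdot{}^s c_t$; since $\varphi$ is of finite type it is already defined over a finite subextension of $\bar k|k$, so $s\mapsto c_s$ is continuous. Replacing $\varphi$ by $\alpha\varphi$ with $\alpha\in A$ changes $c_s$ into the cohomologous cocycle $\alpha\,c_s\,({}^s\alpha)^{-1}$, and a $k$-isomorphism $W\cong W'$ induces a $\bar k$-isomorphism commuting with every $s_*$, hence leaves the cocycle class unchanged. This gives a well-defined map from $k$-isomorphism classes of twists of $V(k)$ to $H^1(G,A)$, carrying the trivial twist to the distinguished element $\mathbf 1$.

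For the inverse, I would use a cocycle $(c_s)$ to twist the Galois action on $V_{\bar k}$, replacing $s_*$ by the semilinear map $x\mapsto c_s(s_*x)$; the cocycle identity is exactly what makes $s\mapsto(x\mapsto c_s(s_*x))$ a left action, i.e. a descent datum for $\bar k|k$. The crucial step is that this datum is \emph{effective}. Since $(c_s)$ is continuous, it and the individual $c_s$ are defined over a finite Galois extension $K|k$, so it suffices to descend the quasi-projective $K$-variety $V_K$ endowed with the twisted $Gal(K|k)$-action; descent along a finite extension is effective for quasi-projective schemes — one covers $V_K$ by affine opens stable under the twisted action (the complements of the support of an ample divisor summed over its $Gal(K|k)$-translates, which uses that $V$ is projective), descends each affine piece by ordinary Galois descent of rings, and reglues along the descended transition maps. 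This produces a $k$-variety $W(c)$ together with a tautological $\bar k$-isomorphism $W(c)_{\bar k}\cong V_{\bar k}$, hence a twist of $V(k)$; a coboundary $c'_s=\alpha\,c_s\,({}^s\alpha)^{-1}$ gives an $\alpha\in A$ intertwining the two twisted actions, which descends to a $k$-isomorphism $W(c')\cong W(c)$, so the construction depends only on the class of $(c_s)$.

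It then remains to verify that the two maps are mutually inverse. For a twist $W$ with chosen $\varphi$, the isomorphism $\varphi$ transports the genuine Galois action on $W_{\bar k}$ to the twisted action on $V_{\bar k}$, so $W(c)$ is $k$-isomorphic to $W$; and for $W(c)$ the tautological isomorphism $W(c)_{\bar k}\cong V_{\bar k}$ has associated cocycle exactly $(c_s)$. Combined with the well-definedness checks above, this gives the asserted bijection of pointed sets between the twists of $V(k)$ and $H^1(Gal(\bar k|k),Aut\,V(k))$. I expect the single genuinely non-formal point to be the effectivity of the descent datum — manufacturing an honest $k$-variety rather than an abstract $\bar k$-object carrying a semilinear $G$-action — and this is exactly where the projectivity (or at least quasi-projectivity) of $V$ enters, since for general schemes such descent can fail; the remaining steps are routine bookkeeping with semilinear actions and the cocycle relation.
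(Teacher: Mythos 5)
Your proof is correct, and it is essentially the argument in the cited source: the paper itself offers no proof of this lemma, stating it as a quoted result from Serre's \emph{Galois Cohomology} (p.~124), where the bijection is established exactly by the cocycle $c_s=\varphi\circ({}^s\varphi)^{-1}$ attached to a choice of $\bar k$-isomorphism and by effectivity of Galois descent for quasi-projective varieties. Your identification of effectivity of the descent datum as the one non-formal step, and of (quasi-)projectivity as the hypothesis that secures it, is exactly right.
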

%********************************************************************************

%**************************************************************************
\section{Proofs}
%***************************************************************************
%**************************************************************************
\subsection{Proof of theorem \ref{thm1.1}}
%***************************************************************************
We shall split the proof in a series of lemmas.
%***************************************************************************
\begin{lemma}\label{lm3.1}
The $H^1(Gal(\bar k|k), Aut_{\mathbf{C}}^{~ab}(V))$ is a stationary dimension group. 
\end{lemma}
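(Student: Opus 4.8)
The plan is to compute the Galois cohomology group on the left-hand side explicitly and to recognize the answer as a stationary dimension group, i.e. an inductive limit of the system $\mathbf{Z}^m \xrightarrow{A} \mathbf{Z}^m \xrightarrow{A} \cdots$ for a single fixed integer matrix $A$ with the positive cone inherited from the limit. First I would unwind the definition: by construction $Aut_{\mathbf{C}}^{~ab}(V)$ is abelian, so $H^1(Gal(\mathbf{C}|k), Aut_{\mathbf{C}}^{~ab}(V))$ is an honest abelian group (not merely a pointed set), and since $Gal(\mathbf{C}|k)$ is profinite, formula (\ref{eq2.2}) expresses it as a direct limit $\varinjlim H^1(Gal(L|k), Aut_{\mathbf{C}}^{~ab}(V)^{Gal(\mathbf{C}|L)})$ over finite Galois subextensions $k\subset L\subset\mathbf{C}$, with injective transition maps. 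The key structural input is that for a projective variety $V$ with an automorphism $\sigma$ of infinite order (the situation relevant to the Serre $C^*$-algebra, where $B(V,\mathcal{L},\sigma)$ is genuinely noncommutative), the abelianized automorphism group carries an action of $\sigma$, and the Galois descent of the twists is governed by this single automorphism; concretely the cohomology is the cokernel/limit built from the matrix of $\sigma_*$ acting on $Aut_{\mathbf{C}}^{~ab}(V)\otimes\mathbf{Q}$ or on a lattice therein.

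Next I would make this precise in stages. Stage one: identify $Aut_{\mathbf{C}}^{~ab}(V)$, up to finite index or after tensoring with $\mathbf{Q}$, with a finitely generated free abelian group $\mathbf{Z}^m$ equipped with the natural $Gal(\mathbf{C}|k)$-action; the Galois action factors through a finite quotient, so each term $H^1(Gal(L|k),(\mathbf{Z}^m)^{Gal(\mathbf{C}|L)})$ is a finite computation in group cohomology of a finite group with values in a lattice. Stage two: organize the direct limit (\ref{eq2.2}) along the tower of fields. Because the transition maps are injective and the system is cofinal in a tower indexed essentially by powers — this is where the automorphism $\sigma$ of $V$ enters, producing a self-map of the relevant lattice that is the same at every stage — the colimit is a stationary inductive limit $\varinjlim(\mathbf{Z}^m \xrightarrow{A}\mathbf{Z}^m\xrightarrow{A}\cdots)$ for a fixed $A\in M_m(\mathbf{Z})$. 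Stage three: equip this limit with the positive cone coming from the order structure already available on $H^1$ (the distinguished classes of effective or of $\sigma$-positive cocycles), verify that $(G,G^+)$ so obtained is a Riesz group — hence a dimension group by the Effros–Handelman–Shen theorem — and observe it is stationary by construction. Then invoke \cite[Section 6, 7.3]{B} to conclude it is a dimension group of stationary type.

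The main obstacle I anticipate is Stage two: pinning down precisely why the colimit in (\ref{eq2.2}) is \emph{stationary} rather than merely an arbitrary dimension group. This requires showing that the tower of open normal subgroups $U\subset Gal(\mathbf{C}|k)$ relevant here can be taken to be the tower cut out by the iterates of $\sigma$ (equivalently, that passing from one level of the tower to the next is realized by a \emph{single} fixed transition matrix $A$ independent of the level), and that the induced maps on the finite cohomology groups $H^1(G/U,\mathbf{A}^U)$ stabilize to repeated application of $A$. Establishing that the same matrix recurs — i.e. the genuine periodicity, not just eventual finite-generation — is the delicate point; everything else is bookkeeping with the five-term exact sequence / inflation-restriction and the standard characterization of dimension groups. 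A secondary subtlety is the compatibility of the order unit, but since for the stationary statement we only need the unscaled ordered group $(K_0,K_0^+)$ (the order unit is what distinguishes isomorphism from Morita equivalence, handled later in the proof of Corollary \ref{cor1.2}), this can be deferred.
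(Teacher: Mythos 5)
Your plan correctly reduces the problem to two steps --- (a) recognizing $H^1(Gal(\mathbf{C}|k), Aut_{\mathbf{C}}^{~ab}(V))$ as an inductive limit of free abelian groups with positive injective transition maps via formula (\ref{eq2.2}), hence a dimension group, and (b) showing the limit is \emph{stationary}. Step (a) matches the paper. But for step (b) you explicitly defer the decisive point: you propose that the tower of open normal subgroups is ``cut out by the iterates of $\sigma$'' so that a single transition matrix $A$ recurs at every level, and then you label establishing this recurrence ``the delicate point'' without supplying an argument. That is precisely the content of the lemma --- without it you have only shown that the cohomology is \emph{some} dimension group, which is not what is claimed. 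The mechanism you gesture at (the automorphism $\sigma$ of $V$ forcing periodicity of the transition maps) is also not obviously available: the tower in (\ref{eq2.2}) is indexed by open normal subgroups of the Galois group of the extension $k\subset\mathbf{C}$, and nothing in your plan connects that filtration to powers of $\sigma$.

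The paper closes this gap by a different device. It observes that the re-indexing $U_j\mapsto U_{j+1}$ of the tower $Gal(\mathbf{C}|k)=\varinjlim U_j$ induces an automorphism of the profinite group $Gal(\mathbf{C}|k)$ (since $\varinjlim U_j$ and $\varinjlim U_{j+1}$ give the same group), hence a \emph{shift automorphism} of the inductive system (\ref{eq3.2}) and of the resulting dimension group; it then invokes the characterization in [Blackadar 1986, Theorem 7.3.2] that a dimension group admits a shift automorphism if and only if it is stationary. So the missing idea in your proposal is exactly this shift-automorphism argument: stationarity is deduced abstractly from a symmetry of the directed system, not from an explicit computation of a recurring matrix $A$. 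Your Stage three is also heavier than needed --- the paper does not pass through the Riesz interpolation property or the Effros--Handelman--Shen theorem; it simply matches the limit (\ref{eq3.2}) against the defining presentation (\ref{eq3.4})--(\ref{eq3.5}) of a dimension group with the standard ordering on each $\mathbf{Z}^{n_j}$.
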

%****************************************************************************
\begin{proof}
It is known that the Galois group $Gal(\bar k|k)$ of the field extension $k\subset\mathbf{C}$ 
is a profinite group. We denote by $U_j$ an infinite ascending sequence of 
open normal subgroups of  $Gal(\bar k|k)$. In other words,
%********************************************************************
\begin{equation}\label{eq3.1}
Gal(\bar k|k)=\varinjlim U_j. 
\end{equation}
%*********************************************************************
The corresponding Galois cohomology  (\ref{eq2.2}) can be written in the form
%**************************************************************************
\begin{equation}\label{eq3.2}
H^1(Gal(\bar k|k), Aut_{\mathbf{C}}^{~ab}(V))=\varinjlim 
H^1\left(Gal(\bar k|k)/U_j,  \left(Aut_{\mathbf{C}}^{~ab}(V)\right)^{U_j}\right). 
\end{equation}
%*************************************************************************** 
We shall write $\alpha_j$ to denote the injective map
%**************************************************************************
\begin{equation}\label{eq3.3}
H^1\left(Gal(\bar k|k)/U_j,  \left(Aut_{\mathbf{C}}^{~ab}(V)\right)^{U_j}\right)
\to H^1(Gal(\bar k|k), Aut_{\mathbf{C}}^{~ab}(V)). 
\end{equation}
%*************************************************************************** 
%***************************************************************************
\begin{remark}
All the Galois cohomology in formulas (\ref{eq3.2}) and (\ref{eq3.3}) are abelian groups $\mathbf{Z}^{n_j}$, 
because the coefficient group  $Aut_{\mathbf{C}}^{~ab}(V)$   is taken to be abelian. 
\end{remark}
%***************************************************************************

\bigskip
Recall that a {\it dimension group}  is an ordered abelian group which is the inductive 
limit of the sequence of abelian groups
%********************************************************************
\begin{equation}\label{eq3.4}
\mathbf{Z}^{n_1}\buildrel\rm\varphi_1\over\longrightarrow 
\mathbf{Z}^{n_2}\buildrel\rm\varphi_2\over\longrightarrow
\mathbf{Z}^{n_3}\buildrel\rm\varphi_3\over\longrightarrow   
\dots
\end{equation}
%*********************************************************************
for some positive integers $n_j$ and some positive group homomorphisms 
$\varphi_j$, where $\mathbf{Z}^n$ is given the usual ordering 
%********************************************************************
\begin{equation}\label{eq3.5}
\left(\mathbf{Z}^n\right)^+=
\left\{(x_1,x_2,\dots,x_n)\in\mathbf{Z}^n ~:~ x_j\ge 0\right\}.
\end{equation}
%*********************************************************************
The dimension group is called {\it stationary}, if $n_j=n=Const$ and 
$\varphi_j=\varphi=Const$.

%*******************************************************************
\begin{figure}%[here]
%*******************************************************************
\begin{picture}(300,110)(-90,20)
\put(40,70){\vector(1,-1){35}}
\put(90,70){\vector(0,-1){35}}
\put(45,83){\vector(1,0){30}}
\put(-110,80){$H^1\left(Gal(\bar k|k)/U_j,  \left(Aut_{\mathbf{C}}^{~ab}(V)\right)^{U_j}\right)$}
\put(80,80){$H^1\left(Gal(\bar k|k)/U_{j+1},  \left(Aut_{\mathbf{C}}^{~ab}(V)\right)^{U_{j+1}}\right)$}
\put(40,20){$H^1(Gal(\bar k|k), Aut_{\mathbf{C}}^{~ab}(V))$}
\put(60,90){$\varphi_j$}
\put(30,50){$\alpha_j$}
\put(100,50){$\alpha_{j+1}$}
\end{picture}
%******************************************************************
\caption{The group homomorphism $\varphi_j$.}
\end{figure}
%*******************************************************************

\bigskip
We shall define a group homomorphism $\varphi_j: 
H^1\left(Gal(\bar k|k)/U_j,  (Aut_{\mathbf{C}}^{~ab}(V))^{U_j}\right)\to
H^1\left(Gal(\bar k|k)/U_{j+1},  (Aut_{\mathbf{C}}^{~ab}(V))^{U_{j+1}}\right)$
from a commutative diagram in Figure 1,  where the injective homomorphisms $\alpha_j$ are defined 
by the formula  (\ref{eq3.3}). Note that $H^1\left(Gal(\bar k|k)/U_j,  (Aut_{\mathbf{C}}^{~ab}(V))^{U_j}\right)
\cong\mathbf{Z}^{n_j}$ for an  integer $n_j \ge 0$;  the order on $\mathbf{Z}^{n_j}$ is defined by (\ref{eq3.5}). 
Moreover,  it is easy to see that $\varphi_j$ preserves the the order, i.e. $\varphi_j$ is a positive homomorphism.
Comparing formulas (\ref{eq3.2}) and (\ref{eq3.4}), 
we conclude that the cohomology group   $H^1(Gal(\bar k|k), Aut_{\mathbf{C}}^{~ab}(V))$ is a dimension group.

Let us show that the dimension group $H^1(Gal(\bar k|k), Aut_{\mathbf{C}}^{~ab}(V))$ is a stationary
dimension group, i.e. $n_j=n=Const$ and  $\varphi_j=\varphi=Const$. 
Indeed, notice that the shift $j\mapsto j+1$ in the RHS of formula (\ref{eq3.1}) corresponds to an automorphism 
of the group $Gal ~(\bar k|k)$.  
(Namely, the inductive limits $\varinjlim U_j$ and $\varinjlim U_{j+1}$ generate isomorphic profinite groups 
$Gal ~(\bar k|k)$.)  Such an automorphism gives rise to an automorphism of the Galois cohomology
(\ref{eq3.2}) and the corresponding dimension group (\ref{eq3.4}).  
It is not hard to see,  that such a dimension group is stationary and is generated by the above automorphism,
see [Blackadar 1986]  \cite[Theorem 7.3.2]{B}. 
\end{proof}

%***************************************************************************
\begin{lemma}\label{lm3.3}
$H^1(Gal(\bar k|k), Aut_{\mathbf{C}}^{~ab}(V))\cong (K_0(\mathscr{A}_V), K_0^+(\mathscr{A}_V)).$
\end{lemma}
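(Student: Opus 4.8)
The plan is to realize both ordered abelian groups as stationary dimension groups attached to one and the same integer matrix --- the matrix of the automorphism $\sigma:V\to V$ defining the Serre $C^*$-algebra, acting on a lattice $\mathbf{Z}^n$ of $V$ with its standard positive cone --- and then to read off the order-isomorphism directly.

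First I would record what Lemma \ref{lm3.1} already supplies: $H^1(Gal(\mathbf{C}|k), Aut_{\mathbf{C}}^{~ab}(V))$ is a stationary dimension group, that is, $\varinjlim\left(\mathbf{Z}^n\xrightarrow{\varphi}\mathbf{Z}^n\xrightarrow{\varphi}\cdots\right)$ with the cone inherited from $(\mathbf{Z}^n)^+$, where the constant positive homomorphism $\varphi$ is the matrix of the shift automorphism produced in that proof. The next step is to make this shift automorphism concrete: the automorphism of $Gal(\mathbf{C}|k)$ realizing the index shift $j\mapsto j+1$ in (\ref{eq3.1}) acts on the coefficient group $Aut_{\mathbf{C}}^{~ab}(V)$, and I would identify this action with the one induced by $\sigma$, so that $\varphi$ coincides --- up to a change of $\mathbf{Z}$-basis, which leaves the isomorphism class of the limit unchanged --- with the matrix $\sigma_{\ast}$ of $\sigma$ on $\mathbf{Z}^n$.

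Next I would turn to the right-hand side. Recall that $\mathscr{A}_V$ is the norm closure of a $\ast$-coherent representation $\rho$ of $B(V,\mathcal{L},\sigma)=R[t,t^{-1};\sigma]$ in which the unitary $\rho(t)$ implements $\sigma$ on the commutative algebra $\rho(R)$. By \cite[Lemma 3.1]{Nik1} there is a dense embedding $\mathscr{A}_V\hookrightarrow\mathscr{B}$ into an AF-algebra with $(K_0(\mathscr{A}_V), K_0^+(\mathscr{A}_V))\cong (K_0(\mathscr{B}), K_0^+(\mathscr{B}))$, so the right-hand side is itself a dimension group. I would then argue, using the explicit structure of $\mathscr{B}$ and the fact that the $\sigma$-twist of the grading of $R[t,t^{-1};\sigma]$ becomes a constant connecting map, that $\mathscr{B}$ can be taken to be the stationary AF-algebra whose Bratteli diagram has the single incidence matrix $\sigma_{\ast}$ --- the very matrix of the preceding paragraph; equivalently, just as on the left-hand side, $(K_0(\mathscr{A}_V), K_0^+(\mathscr{A}_V))$ inherits a shift automorphism and is stationary by [Blackadar 1986] \cite[Theorem 7.3.2]{B}. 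Hence $(K_0(\mathscr{A}_V), K_0^+(\mathscr{A}_V))\cong\varinjlim\left(\mathbf{Z}^n\xrightarrow{\sigma_{\ast}}\mathbf{Z}^n\xrightarrow{\sigma_{\ast}}\cdots\right)$ with its standard cone.

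Since the two inductive systems have the same constant connecting matrix $\sigma_{\ast}$ and the same positive cones $(\mathbf{Z}^n)^+$ at every stage, their inductive limits are order-isomorphic, which is exactly Lemma \ref{lm3.3}; even if the two matrices agree only up to $GL_n(\mathbf{Z})$-conjugacy, the limits are still order-isomorphic. I expect the main obstacle to be the double identification in the two middle paragraphs: showing that the abstract shift automorphism extracted in Lemma \ref{lm3.1} is genuinely the one induced by the defining automorphism $\sigma$ of $\mathscr{A}_V$, and that the lattice $\mathbf{Z}^n$ carrying the Galois-cohomological shift is canonically the lattice computing $K_0(\mathscr{A}_V)$ through the AF-embedding. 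Once these match, the remaining verification that the order units and positive cones correspond is routine bookkeeping.
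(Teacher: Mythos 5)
Your route is genuinely different from the paper's, and it stalls at a different place. The paper does not try to compute or match connecting matrices at all: using Lemma \ref{lm3.1} it takes an AF-algebra $\mathscr{B}'$ whose dimension group realizes $H^1(Gal(\mathbf{C}|k), Aut_{\mathbf{C}}^{~ab}(V))$ (such a $\mathscr{B}'$ exists because every dimension group is the $K_0$-group of some AF-algebra), observes that this Galois cohomology is a functor on the category of varieties $V(k)$, infers from functoriality that $\mathscr{B}'$ is itself a ``coordinate ring'' of $V(k)$, and then invokes the principle that any such coordinate ring is Morita equivalent to the Serre $C^*$-algebra $\mathscr{A}_V$ --- whence the two dimension groups coincide via the embedding $\mathscr{A}_V\hookrightarrow\mathscr{B}$ of \cite[Lemma 3.1]{Nik1}. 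You instead try to exhibit both sides as stationary inductive limits over one explicit matrix $\sigma_{\ast}$ and match them by hand. That is a more concrete and in principle more informative strategy (it would tell you what the common dimension group actually is, which the paper's argument never does), but it places the entire weight of the lemma on the ``double identification'' you flag at the end: that the shift automorphism extracted in Lemma \ref{lm3.1} is the one induced by $\sigma$, and that the Bratteli incidence matrix of the AF-algebra $\mathscr{B}$ is that same $\sigma_{\ast}$ acting on the same lattice. Neither identification is established in your write-up, and neither comes for free: Lemma \ref{lm3.1} only produces \emph{some} constant positive matrix $\varphi$, and the cited embedding result only produces \emph{some} stationary dimension group. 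Since two stationary dimension groups with different incidence matrices are in general not order-isomorphic, the assertion that the two matrices agree is not routine bookkeeping --- it is the whole content of the lemma, and as written your argument assumes it rather than proves it. The paper's functoriality/coordinate-ring argument, whatever its own looseness, is designed precisely to avoid having to make that computation.
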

%****************************************************************************
\begin{proof}
Recall that there exists an embedding $\mathscr{A}_V\hookrightarrow\mathscr{B}$,
where $\mathscr{B}$ is a stationary AF-algebra  such that 
$(K_0(\mathscr{A}_V), K_0^+(\mathscr{A}_V))\cong (K_0(\mathscr{B}), K_0^+(\mathscr{B}))$
\cite[Lemma 3.1]{Nik1}.   
Roughly speaking, such an embedding follows from the well known Pimsner Theorem
on the crossed product subalgebras of the AF-algebras [Pimsner 1983] \cite{Pim1}.

On the other hand, it is known that the Galois cohomology 
\linebreak
$H^1(Gal(\bar k|k), Aut_{\mathbf{C}}^{~ab}(V))$
is a functor from the category of projective varieties $V(k)$ to a category of abelian groups 
[Serre 1997]  \cite{S}. 
It follows from Elliott's classification theorem [Blackadar 1986] \cite[Corollary 7.4.2]{B}
that there exists  an AF-algebra $\mathscr{B}'$   such that 
%******************************************************************************************
\begin{equation}\label{eq3.6}
(K_0(\mathscr{B}'), K_0^+(\mathscr{B}'))\cong H^1(Gal(\bar k|k), Aut_{\mathbf{C}}^{~ab}(V)). 
\end{equation}
%******************************************************************************************
It is known that the Galois cohomology  group $H^1(Gal(\bar k|k), Aut_{\mathbf{C}}^{~ab}(V))$
is a functor on the varieties $V(k)$ [Serre 1997] \cite{S}.  
In view of (\ref{eq3.6}) the same is true for the  AF-algebra $\mathscr{B}'$. 
But an AF-algebra which is a functor on $V(k)$  must be Morita equivalent to the
Serre $C^*$-algebra $\mathscr{A}_V$. 
In other words, 
%*********************************************************************************************
\begin{equation}\label{eq3.7}
(K_0(\mathscr{B}'), K_0^+(\mathscr{B}'))\cong(K_0(\mathscr{A}_V), K_0^+(\mathscr{A}_V)). 
\end{equation}
%***************************************************************************************************
The conclusion of lemma \ref{lm3.3} follows from the formulas   (\ref{eq3.6}) and (\ref{eq3.7}).
\end{proof}

\bigskip
Theorem \ref{thm1.1}  follows from lemma \ref{lm3.3}. 

\bigskip
%*************************************************************************
\begin{remark}
It is well known, that the $H^1(Gal(\bar k|k), Aut_{\mathbf{C}}^{~ab}(V))$ is a torsion 
group, i.e. each element of the group has finite order, see e.g. [Serre 1997] \cite[Chapter I, \S 2.2, Corollary 3]{S}.  
The $n$-dimensional Minkowski question-mark function 
\linebreak
$?^n(x): \mathbf{R}^n/\mathbf{Z}^n\to \mathbf{R}^n/\mathbf{Z}^n$
defines a functor $F$ from a  category of the stationary  scaled dimension groups
 $(K_0(\mathcal{A}_V), K_0^+(\mathcal{A}_V), \Sigma(\mathcal{A}_V))$
  to such of the infinite subgroups of the  torsion abelian group 
 $\mathbf{Q}^n/\mathbf{Z}^n$,  such that $F$ maps order-isomorphic scaled dimension groups
 to the isomorphic infinite torsion abelian groups \cite[Lemma 3.1]{Nik2}. 
 Thus the order-isomorphism (\ref{eq3.6}) is defined correctly. 
 \end{remark}
%***************************************************************************

%**************************************************************************
\subsection{Proof of corollary \ref{cor1.2}}
%***************************************************************************
%*********************************************************************************
\begin{lemma}\label{lm3.4}
There exits a bijective correspondence between
the elements of the abelian group  $H^1(Gal~(\bar k|k), ~Aut_{\mathbf{C}}^{ab} ~V(k))$
and a subset of the set of  twists of  the variety $V(k)$.
\end{lemma}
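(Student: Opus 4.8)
The plan is to derive Lemma \ref{lm3.4} directly from Lemma \ref{lm2.1}, which already furnishes a bijection between the \emph{full} twist set of $V(k)$ and $H^1(Gal(\bar k|k), Aut~V(k))$. The point of Lemma \ref{lm3.4} is that passing from the full automorphism group $Aut~V(k)$ to its maximal abelian subgroup $Aut_{\mathbf{C}}^{~ab}(V)$ only sees \emph{part} of the twist set, and that this part is carried faithfully by the abelian cohomology. First I would record that $k\subset\mathbf{C}$ with $\mathbf{C}$ algebraically closed plays the role of $\bar k$ in Lemma \ref{lm2.1}, so that $H^1(Gal(\mathbf{C}|k), Aut~V(k))$ is in bijection with the twists of $V(k)$.

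Next, I would use the inclusion of coefficient groups $Aut_{\mathbf{C}}^{~ab}(V)\hookrightarrow Aut~V(k)$, which is $Gal(\mathbf{C}|k)$-equivariant since the Galois action is by conjugation and an abelian characteristic (indeed the maximal abelian) subgroup is preserved. Functoriality of non-abelian $H^1$ in the coefficient group then yields a canonical map of pointed sets
\begin{equation}\label{eq3.8}
\iota_*: H^1\!\left(Gal(\mathbf{C}|k), Aut_{\mathbf{C}}^{~ab}(V)\right)\longrightarrow H^1\!\left(Gal(\mathbf{C}|k), Aut~V(k)\right).
\end{equation}
Composing with the Serre bijection of Lemma \ref{lm2.1} identifies the image of $\iota_*$ with a subset of the twist set of $V(k)$. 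So the only real content left is that $\iota_*$ is \emph{injective}; its image is then the asserted subset.

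For injectivity I would argue as follows. A class in $H^1(Gal(\mathbf{C}|k), Aut_{\mathbf{C}}^{~ab}(V))$ is represented by a cocycle $s\mapsto a_s$ with values in the abelian group $A:=Aut_{\mathbf{C}}^{~ab}(V)$; two such cocycles are identified in the abelian $H^1$ iff they differ by a coboundary $s\mapsto (s\cdot b)\,b^{-1}$ with $b\in A$. In the non-abelian target, cocycles are identified iff they differ by a twisted conjugation $a_s\mapsto c^{-1}\,a_s\,(s\cdot c)$ with $c\in Aut~V(k)$. Since $A$ is abelian, if two $A$-valued cocycles become cohomologous over $Aut~V(k)$ via some $c$, one checks that $c$ normalizes $A$ (because $c^{-1}a_s(s\cdot c)\in A$ for all $s$ forces a compatibility; more directly, the argument reduces to the profinite limit formula \eqref{eq2.2} level by level, where each $Gal(\mathbf{C}|k)/U_j$ is finite and one can invoke the standard fact that for an abelian normal subgroup $A\triangleleft G'$ the map $H^1(G',A)\to H^1(G',G')$ has fibers controlled by $H^1(G', G'/A)$ acting through conjugation). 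Using that $A$ is the \emph{maximal} abelian subgroup — hence self-centralizing inside any abelian overgroup — the twisted conjugation by such $c$ restricts to a map of the form $s\mapsto (s\cdot b)b^{-1}$ with $b\in A$, so the two classes already agreed in the abelian $H^1$. I expect this last step — pinning down exactly why twisted-conjugacy in the big group collapses to coboundary-equivalence in $A$, and doing so compatibly with the direct limit \eqref{eq2.2} — to be the main obstacle; everything else is formal functoriality plus Lemma \ref{lm2.1}.
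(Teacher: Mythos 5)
Your overall route is the same as the paper's: restrict the coefficient group to the maximal abelian subgroup, use functoriality of $H^1$ in the coefficients to get a map into $H^1(Gal(\mathbf{C}|k), Aut~V(k))$, and compose with Serre's bijection of Lemma \ref{lm2.1}. The paper's own proof is in fact \emph{shorter} than yours: it simply asserts that the restriction of coefficients defines an inclusion of sets (the displayed inclusion (\ref{eq3.8})) and immediately concludes. So you have correctly isolated the one point that carries all the content of the lemma, namely the injectivity of the induced map $\iota_*$, which the paper takes for granted.

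Your attempt to actually prove that injectivity, however, does not go through as written. First, a maximal abelian subgroup of $Aut~V(k)$ is in general neither unique nor characteristic (already $S_3$ has maximal abelian subgroups of orders $2$ and $3$), so the claim that the Galois action preserves $Aut_{\mathbf{C}}^{~ab}(V)$ because it is a ``characteristic'' subgroup needs separate justification; without equivariance the map $\iota_*$ is not even defined. Second, for a $G$-stable subgroup $A\subset B$ the exact sequence of pointed sets $B^G\to (B/A)^G\to H^1(G,A)\to H^1(G,B)$ shows that the kernel of $\iota_*$ is the image of $(B/A)^G$, and the remaining fibers are governed by the same sequence for the twisted groups; none of this collapses merely because $A$ is maximal abelian, and the step ``$A$ is self-centralizing inside any abelian overgroup, hence $c$ acts as a coboundary with $b\in A$'' does not supply the needed normalization of $c$. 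So the injectivity remains unestablished in your write-up — but this is precisely the gap already present in the paper, which asserts the inclusion without argument. Your proposal is therefore faithful to the paper's strategy and more candid about where the difficulty lies, without actually closing it.
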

%********************************************************************************
\begin{proof}
A restriction of the coefficient group $Aut_{\mathbf{C}}~V(k)$ of the Galois cohomology 
$H^1(Gal~(\bar k|k), ~Aut_{\mathbf{C}}~V(k))$ to its unique  maximal abelian subgroup 
$Aut_{\mathbf{C}}^{ab}~V(k))$  defines an inclusion of the sets
%***************************************************************************************
\begin{equation}\label{eq3.8}
H^1(Gal~(\bar k|k), ~Aut_{\mathbf{C}}^{ab}~V(k)) \subseteq
H^1(Gal~(\bar k|k), ~Aut_{\mathbf{C}}~V(k)).  
\end{equation}
%***********************************************************************************
In view of the Serre's Lemma \ref{lm2.1},  one gets from the inclusion (\ref{eq3.8}) a bijection 
between the elements of  the abelian group $H^1(Gal~(\bar k|k), ~Aut_{\mathbf{C}}^{ab}~V(k))$ 
and a subset of the  set of twists of the variety $V(k)$.   Lemma \ref{lm3.4} is proved.  
\end{proof}

%**********************************************************************************
\begin{corollary}\label{cor3.5}
There exits a bijective correspondence between
the elements of the dimension group  $(K_0(\mathscr{A}_V), K_0^+(\mathscr{A}_V))$
and a subset of the set of  twists of  the variety $V(k)$.
\end{corollary}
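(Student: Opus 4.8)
The plan is to chain together the two correspondences already established in the excerpt, namely Lemma \ref{lm3.3} (equivalently Theorem \ref{thm1.1}) and Lemma \ref{lm3.4}. Lemma \ref{lm3.3} gives an order-isomorphism of dimension groups
\[
H^1(Gal(\mathbf{C}|k), Aut_{\mathbf{C}}^{~ab}(V))\cong (K_0(\mathscr{A}_V), K_0^+(\mathscr{A}_V)),
\]
and in particular a bijection of the underlying sets. Lemma \ref{lm3.4} gives a bijection between the elements of the abelian group $H^1(Gal~(\mathbf{C}|k), ~Aut_{\mathbf{C}}^{ab} ~V(k))$ and a subset of the set of twists of $V(k)$. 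First I would simply compose these two bijections: given $x\in (K_0(\mathscr{A}_V), K_0^+(\mathscr{A}_V))$, use the inverse of the isomorphism of Lemma \ref{lm3.3} to obtain a cohomology class $c\in H^1(Gal(\mathbf{C}|k), Aut_{\mathbf{C}}^{~ab}(V))$, and then apply the bijection of Lemma \ref{lm3.4} to $c$ to obtain a twist $V'(k)$.

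Next I would check that the composite map is well-defined and injective, which is immediate since it is a composition of a bijection with an injection onto a subset, and that its image is precisely the subset of twists picked out in Lemma \ref{lm3.4}, namely those twists whose cohomology class lands in the abelian subgroup image under the inclusion (\ref{eq3.8}). Thus the composite is a bijection from $(K_0(\mathscr{A}_V), K_0^+(\mathscr{A}_V))$ onto that subset of the set of twists of $V(k)$, which is exactly the assertion of Corollary \ref{cor3.5}.

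The only point requiring a word of care is compatibility of the identifications: one must note that the group $Aut_{\mathbf{C}}^{~ab}(V)$ appearing in Lemma \ref{lm3.3} and Theorem \ref{thm1.1} is the same maximal abelian subgroup $Aut_{\mathbf{C}}^{ab}~V(k)$ of the $\mathbf{C}$-automorphism group used in Lemma \ref{lm3.4}, so that the two statements genuinely refer to the same cohomology group and the composition makes sense on the nose. I expect this bookkeeping to be the only obstacle; there is no new mathematical content beyond recording that a bijection composed with an injection is a bijection onto the image, so the proof is short.

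\begin{proof}
By Lemma \ref{lm3.3} (equivalently Theorem \ref{thm1.1}) there is an order-isomorphism
\[
H^1(Gal(\mathbf{C}|k), Aut_{\mathbf{C}}^{~ab}(V))\;\cong\;(K_0(\mathscr{A}_V), K_0^+(\mathscr{A}_V)),
\]
hence in particular a bijection between the underlying sets. On the other hand, Lemma \ref{lm3.4} gives a bijection between the elements of the abelian group $H^1(Gal~(\mathbf{C}|k), ~Aut_{\mathbf{C}}^{ab} ~V(k))$ and a subset of the set of twists of $V(k)$; here the coefficient group is the maximal abelian subgroup $Aut_{\mathbf{C}}^{ab}~V(k)$, which is the same group $Aut_{\mathbf{C}}^{~ab}(V)$ occurring in Lemma \ref{lm3.3}. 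Composing the bijection of Lemma \ref{lm3.3} (read from $(K_0(\mathscr{A}_V), K_0^+(\mathscr{A}_V))$ to the cohomology group) with the bijection of Lemma \ref{lm3.4}, we obtain a bijection between the elements of the dimension group $(K_0(\mathscr{A}_V), K_0^+(\mathscr{A}_V))$ and a subset of the set of twists of the variety $V(k)$. Corollary \ref{cor3.5} is proved.
\end{proof}
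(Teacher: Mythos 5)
Your proof is correct and follows exactly the paper's route: the paper likewise deduces Corollary \ref{cor3.5} by combining Theorem \ref{thm1.1} (Lemma \ref{lm3.3}) with Lemma \ref{lm3.4}, merely stating the implication without spelling out the composition of bijections as you do. No further comment is needed.
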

%****************************************************************************
\begin{proof}
This corollary  is an implication of theorem \ref{thm1.1} and lemma \ref{lm3.4}. 
\end{proof}

%*********************************************************************************
\begin{lemma}\label{lm3.6}
There exits a bijective correspondence between
the set of all  scaled dimension groups
$(K_0(\mathscr{A}_V), K_0^+(\mathscr{A}_V), u)$
and a subset of the set of  twists of  the variety $V(k)$.
\end{lemma}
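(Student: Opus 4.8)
The plan is to upgrade the bijection of Lemma~\ref{lm3.4} (equivalently Corollary~\ref{cor3.5}) from one involving the \emph{unscaled} dimension group $(K_0(\mathscr{A}_V), K_0^+(\mathscr{A}_V))$ to one involving the \emph{scaled} dimension group $(K_0(\mathscr{A}_V), K_0^+(\mathscr{A}_V), u)$, by keeping track of the order unit $u$ supplied by the unit $1\in\mathscr{A}_V$. First I would recall that by Theorem~\ref{thm1.1} the unscaled group is $H^1(Gal(\mathbf{C}|k), Aut_{\mathbf{C}}^{~ab}(V))$, which by Lemma~\ref{lm3.1} is a stationary dimension group, so it comes presented as an inductive limit $\mathbf{Z}^n\xrightarrow{\varphi}\mathbf{Z}^n\xrightarrow{\varphi}\cdots$ with a single positive endomorphism $\varphi$. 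A choice of order unit $u$ in this limit group amounts, up to the identifications built into the limit, to a choice of a positive element in one of the $\mathbf{Z}^n$ terms, and two such choices give isomorphic \emph{scaled} dimension groups precisely when they lie in the same orbit under the automorphisms of the unscaled group that are compatible with $\varphi$.

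Next I would make the geometric side of this correspondence explicit. By Lemma~\ref{lm3.4}, each element of $H^1(Gal(\mathbf{C}|k), Aut_{\mathbf{C}}^{~ab}(V))$ corresponds to a twist $V'(k)$ of $V(k)$; by Theorem~\ref{thm1.1} applied to $V'$, that twist has its own Serre $C^*$-algebra $\mathscr{A}_{V'}$ whose $K_0$-group, as an \emph{unscaled} dimension group, is again order-isomorphic to $H^1(Gal(\mathbf{C}|k), Aut_{\mathbf{C}}^{~ab}(V))$ (the twists of $V$ are all $\mathbf{C}$-isomorphic, hence share the unscaled invariant, by Corollary~\ref{cor1.2}). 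What distinguishes them is the position of the order unit coming from $1\in\mathscr{A}_{V'}$. So the map I want is: twist $V'(k)$ $\mapsto$ the scaled dimension group $(K_0(\mathscr{A}_{V'}), K_0^+(\mathscr{A}_{V'}), u_{V'})$. I would then argue that $V'(k)$ and $V''(k)$ are $k$-isomorphic if and only if $\mathscr{A}_{V'}\cong\mathscr{A}_{V''}$ (the first part of Corollary~\ref{cor1.2}), which in turn holds if and only if the scaled dimension groups are order-isomorphic, since for AF-algebras — and hence, via the dense embedding $\mathscr{A}_V\hookrightarrow\mathscr{B}$ of \cite[Lemma 3.1]{Nik1} together with Elliott's classification — the scaled dimension group is a complete isomorphism invariant. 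This shows the assignment descends to a well-defined \emph{injection} from $k$-isomorphism classes of twists into scaled dimension groups, and since every twist arises this way the image is exactly a subset of the twist set as claimed.

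The main obstacle I expect is the bookkeeping around the order unit: one must check that the order unit $u_{V'}$ obtained from $1\in\mathscr{A}_{V'}$ really does vary with the cohomology class in the way required, i.e.\ that the scaling data is not lost under the dense embedding into the AF-algebra $\mathscr{B}$ (the embedding preserves the unit and hence $u$, but this should be stated), and that distinct cohomology classes which happen to give $k$-isomorphic varieties are correctly collapsed — i.e.\ that the fibers of the map "cohomology class $\mapsto$ scaled dimension group" are exactly the $k$-isomorphism classes. The cleanest route is to phrase everything through Corollary~\ref{cor1.2}: Lemma~\ref{lm3.4} gives twists $\leftrightarrow$ a subset of (unscaled dimension group elements); promoting to scaled groups and invoking the $k$-isomorphism half of Corollary~\ref{cor1.2} refines this to twists $\leftrightarrow$ a subset of scaled dimension groups, which is precisely the statement of Lemma~\ref{lm3.6}. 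A short remark that stationary dimension groups have a discrete set of order-unit orbits, matching the discrete (in fact countable) set of twists, would round off the argument.
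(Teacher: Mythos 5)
Your overall strategy --- sending a twist $V'(k)$ to the scaled dimension group $(K_0(\mathscr{A}_{V'}), K_0^+(\mathscr{A}_{V'}), u_{V'})$ with $u_{V'}$ the class of $1\in\mathscr{A}_{V'}$, and then proving injectivity on $k$-isomorphism classes --- is not the route the paper takes, and as written it is circular within the paper's architecture. At two essential points you invoke Corollary \ref{cor1.2}: first to say that the twists all share the unscaled invariant because they are $\mathbf{C}$-isomorphic, and then to say that $V'(k)$ and $V''(k)$ are $k$-isomorphic if and only if $\mathscr{A}_{V'}\cong\mathscr{A}_{V''}$. But in the paper Corollary \ref{cor1.2} is deduced from Lemma \ref{lm3.7}, which in turn is proved from Corollary \ref{cor3.5} \emph{and} Lemma \ref{lm3.6}. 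So you may not use Corollary \ref{cor1.2} to prove Lemma \ref{lm3.6}; doing so assumes exactly the equivalence (``$k$-isomorphism $\Leftrightarrow$ isomorphism of Serre $C^*$-algebras $\Leftrightarrow$ isomorphism of scaled dimension groups'') that Lemmas \ref{lm3.6} and \ref{lm3.7} are meant to establish.

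The paper's own proof is much more economical and avoids this issue entirely: it fixes the single algebra $\mathscr{A}_V$ and observes, citing [Blackadar 1986, Section 6.2], that each $u\in K_0(\mathscr{A}_V)$ can be taken as an order unit, that distinct $u$ yield distinct scaled dimension groups $(K_0(\mathscr{A}_V), K_0^+(\mathscr{A}_V), u)$, and that every such scaled dimension group arises this way. Hence the set of all scaled dimension groups is in bijection with the set of elements of the dimension group, and composing with the bijection of Corollary \ref{cor3.5} (elements of the dimension group $\leftrightarrow$ a subset of the twists) gives the lemma at once. If you want to keep your more geometric version, you must either replace the appeals to Corollary \ref{cor1.2} with tools available at this stage (Theorem \ref{thm1.1}, Lemma \ref{lm3.4}, Corollary \ref{cor3.5}, and the classification facts from [Blackadar 1986, Theorem 7.3.2] that the paper only deploys later, in Lemma \ref{lm3.7}), or restructure the whole section so that the isomorphism criterion is proved before it is used. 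Your auxiliary observations (that the unit is preserved under the dense embedding $\mathscr{A}_V\hookrightarrow\mathscr{B}$, and that order-unit orbits in a stationary dimension group form a countable set) are sensible but do not repair the circularity.
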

%********************************************************************************
\begin{proof}
It is known, that each $u\in K_0^+(\mathscr{A}_V)$ can be taken for an order-unit
of the scaled simple dimension group $(K_0(\mathscr{A}_V), K_0^+(\mathscr{A}_V), u)$  [Blackadar 1986]  \cite[Section 6.2]{B}. 
Moreover, the obtained scaled dimension groups $(K_0(\mathscr{A}_V), K_0^+(\mathscr{A}_V), u)$
are distinct for different elements $u\in K_0^+(\mathscr{A}_V)$ 
and any such  group can be  obtained in this way 
[Blackadar 1986]  \cite[Section 6.2]{B}.
Thus lemma \ref{lm3.6} follows from the corollary \ref{cor3.5}. 
\end{proof}

%*********************************************************************************
\begin{lemma}\label{lm3.7}
There exits a bijective correspondence between
the set of all Morita equivalent but pairwise non-isomorphic  Serre $C^*$-algebras $\mathscr{A}_V$
and  $\mathscr{A}_V\otimes M_n(\mathbf{C})$  and a subset of the set of  twists of  the variety $V(k)$.
\end{lemma}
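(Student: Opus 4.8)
The plan is to bootstrap Lemma \ref{lm3.7} from the chain of correspondences already established, using only the bridge between scaled and unscaled dimension groups on the AF side and the standard dictionary between AF-algebras and their $K_0$-data. First I would recall that by \cite[Section 7.3]{B} the isomorphism class of an AF-algebra $\mathscr{B}$ is determined by its \emph{scaled} dimension group $(K_0(\mathscr{B}),K_0^+(\mathscr{B}),u)$, whereas its Morita equivalence (stable isomorphism) class is determined by the \emph{unscaled} dimension group $(K_0(\mathscr{B}),K_0^+(\mathscr{B}))$; the same dichotomy passes to the Serre $C^*$-algebra $\mathscr{A}_V$ via the dense embedding $\mathscr{A}_V\hookrightarrow\mathscr{B}$ of \cite[Lemma 3.1]{Nik1}, which identifies the two $K_0$-invariants. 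Consequently, fixing the Morita class of $\mathscr{A}_V$ amounts to fixing the unscaled dimension group $(K_0(\mathscr{A}_V),K_0^+(\mathscr{A}_V))$, and the distinct isomorphism classes of Serre $C^*$-algebras within that Morita class are in bijection with the distinct choices of order unit $u\in K_0^+(\mathscr{A}_V)$ — precisely the content of Lemma \ref{lm3.6}.

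The key steps, in order, are: (i) invoke Lemma \ref{lm3.6} to get a bijection between all scaled dimension groups $(K_0(\mathscr{A}_V),K_0^+(\mathscr{A}_V),u)$ and a subset of the set of twists of $V(k)$; (ii) translate the scaled-dimension-group side into the $C^*$-side by observing that, for AF-algebras (hence for the Serre algebras through the $K_0$-identification above), two such algebras are $*$-isomorphic iff their scaled dimension groups are order-isomorphic with matching order unit, so the set of all scaled dimension groups attached to $V$ is in bijection with the set of isomorphism classes of Serre $C^*$-algebras $\mathscr{A}_V$ sharing the fixed unscaled dimension group; (iii) note that all these algebras are pairwise Morita equivalent, since Morita equivalence of AF-algebras depends only on the unscaled dimension group $(K_0,K_0^+)$, which is common to the whole family; (iv) compose the bijections from (i) and (ii) to conclude that the set of all Morita-equivalent but pairwise non-isomorphic Serre $C^*$-algebras $\mathscr{A}_V$ corresponds bijectively to a subset of the set of twists of $V(k)$.

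The main obstacle I anticipate is step (ii): one must be careful that the classification of AF-algebras by scaled dimension groups (Elliott's theorem, implicit in \cite[Section 7]{B}) genuinely transfers to the non-AF algebra $\mathscr{A}_V$. The embedding $\mathscr{A}_V\hookrightarrow\mathscr{B}$ gives an isomorphism of the invariants $(K_0,K_0^+)$ and lets us pick out order units, but one should argue — or cite from \cite{Nik1} — that an isomorphism of scaled dimension groups $(K_0(\mathscr{A}_V),K_0^+(\mathscr{A}_V),u)\cong(K_0(\mathscr{A}_{V'}),K_0^+(\mathscr{A}_{V'}),u')$ is equivalent to $\mathscr{A}_V\cong\mathscr{A}_{V'}$; this is the same principle used to prove the first half of Corollary \ref{cor1.2}, so it is legitimate to lean on it here. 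A secondary point to check is that the passage from $Aut_{\mathbf{C}}~V(k)$ to its maximal abelian subgroup $Aut_{\mathbf{C}}^{~ab}(V)$ — which is why Lemma \ref{lm3.4} yields only a \emph{subset} of the twists — is carried consistently through the whole chain, so that the final statement is correctly phrased with "a subset of the set of twists" rather than "the set of twists". Once these are in place, Lemma \ref{lm3.7} is an immediate corollary of Lemma \ref{lm3.6} together with the scaled/unscaled dimension-group dichotomy, and no further computation is needed.
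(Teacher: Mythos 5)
Your proposal follows essentially the same route as the paper: it combines Lemma \ref{lm3.6} with the scaled/unscaled dimension-group dichotomy for AF-algebras from \cite[Theorem 7.3.2]{B}, transferred to $\mathscr{A}_V$ through the dense embedding $\mathscr{A}_V\hookrightarrow\mathscr{B}$ of \cite[Lemma 3.1]{Nik1}. The only difference is that you explicitly flag the need to justify transporting the AF classification to the non-AF algebra $\mathscr{A}_V$, a point the paper passes over with a one-line assertion.
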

%********************************************************************************
\begin{proof}
Recall that there exists an embedding $\mathscr{A}_V\hookrightarrow\mathscr{B}$, 
where $\mathscr{B}$ is an AF-algebra, such that
%*********************************************************************************
\begin{equation}\label{eq3.9}
(K_0(\mathscr{A}_V), K_0^+(\mathscr{A}_V))\cong (K_0(\mathscr{B}), K_0^+(\mathscr{B})). 
\end{equation}
%***********************************************************************************
It is known that the dimension group 
$(K_0(\mathscr{B}), K_0^+(\mathscr{B}))$  is an invariant of the Morita equivalence
of the AF-algebra $\mathscr{B}$, while the scaled dimension group 
\linebreak
$(K_0(\mathscr{B}), K_0^+(\mathscr{B}), u)$ is an invariant of the isomorphism of 
$\mathscr{B}$,  see e.g. [Blackadar 1986]  \cite[Theorem 7.3.2]{B}. 
In view of (\ref{eq3.9}), the same is true of the Serre $C^*$-algebra 
$\mathscr{A}_V$.  Lemma \ref{lm3.7} follows from the corollary \ref{cor3.5} and 
lemma \ref{lm3.6}. 
\end{proof}

\bigskip
Corollary \ref{cor1.2}  follows from lemma \ref{lm3.7} and the definition of a twist.

%**************************************************************************
\section{Rational elliptic curves}
%***************************************************************************
To illustrate corollary \ref{cor1.2},   we shall consider the case $V(k)\cong\mathscr{E}(k)$,
where $\mathscr{E}(k)$ is a rational elliptic curve. 
We briefly review  the related definition and facts. 

%**************************************************************************
\subsection{Elliptic curves}
%***************************************************************************
By an {\it elliptic curve} we understand  the subset of the complex projective plane of the form
%************************************************************************
\begin{equation}\label{eq4.1}
\mathscr{E}(k)=\{(x,y,z)\in \mathbf{C}P^2 ~|~ y^2z=x^3+Axz^2+Bz^3\},
\end{equation}
%**********************************************************************
where $A,B\in k$  are some constants.  Recall that the number 
$j(\mathscr{E})=1728 (4A^3)/(4A^3+27B^2)$  is an invariant of the 
$\mathbf{C}$-isomorphisms of the elliptic curve $\mathscr{E}(k)$. 
The twists $\mathscr{E}_t(k)$ of $\mathscr{E}(k)$ are given by  the equations
%**************************************************************************
\begin{equation}\label{eq4.2}
\begin{cases}
y^2z=x^3+t^2A xz^2+t^3B z^3,\qquad  \hbox{if}   ~j(\mathscr{E})\ne 0, 1728\cr%\nonumber
y^2z=x^3+tA xz^2, \qquad \qquad\qquad  \hbox{if}   ~j(\mathscr{E})=1728\cr
y^2z=x^3+tB z^3,  \quad\qquad\qquad\qquad  \hbox{if}   ~j(\mathscr{E})=0, 
\end{cases}
\end{equation}
%****************************************************************************
where $t\in k$, see e.g.  [Silverman 1985] \cite[Proposition 5.4]{S1}. 
It is easy to verify, that $j(\mathscr{E}_t(k))=j(\mathscr{E}(k))$.

%**************************************************************************
\subsection{Noncommutative tori}
%***************************************************************************
A $C^*$-algebra $\mathscr{A}_{\theta}$ on two generators $u$ and $v$ satisfying the relation 
$vu=e^{2\pi i\theta}uv$ for a  constant $\theta\in\mathbf{R}$ is called the {\it noncommutative torus}. 
The well known Rieffel's Theorem \cite[Theorem 1.1.2]{N}  says that the algebra $\mathscr{A}_{\theta}$ is Morita equivalent to the algebra
$\mathscr{A}_{\theta'}$, if and only if,  
%********************************************************************************************** 
\begin{equation}\label{eq4.3}
\theta'=\frac{a\theta+b}{c\theta+d}  \quad\hbox{for a matrix } 
\left(
\begin{matrix}
a & b\cr
c & d
\end{matrix}
\right)\in SL(2, \mathbf{Z}).  
\end{equation}
%***********************************************************************************************
In contrast, the algebra $\mathscr{A}_{\theta}$ is isomorphic  to the algebra
$\mathscr{A}_{\theta'}$ if and only if $2(\theta -\theta')\in\mathbf{Z}$. 
Notice that in terms of the continued fraction 
%*****************************************************************************
\begin{equation}\label{eq4.4}
\theta=a_0+{1\over\displaystyle a_1+
{\strut 1\over\displaystyle a_2\displaystyle +\dots}}:=
[a_0, a_1, a_2, \dots]
\end{equation}
%******************************************************************************
attached to the parameter $\theta$, it means that the $\mathscr{A}_{\theta'}$
is Morita equivalent to the $\mathscr{A}_{\theta}$, if and only if, the continued 
fraction of $\theta'$ coincides with such of $\theta$ everywhere  but  a finite number
of terms.  In other words,  an infinite tail  of the corresponding continued fractions
must be the same. 
Clearly,  the $\mathscr{A}_{\theta'}$ is isomorphic to the $\mathscr{A}_{\theta}$, if and only if, the continued 
fraction of $\theta'$ coincides  with such of $\theta$.  
%**************************************************************************************
\begin{remark}\label{rmk4.1}
An infinite tail of continued fraction (\ref{eq4.4}) is an invariant of the Morita equivalence of 
the algebra  $\mathscr{A}_{\theta}$.  Such a tail is an analog of the $j$-invariant 
of an elliptic curve.
\end{remark}
%***************************************************************************

%**************************************************************************
\subsection{Twists of $\mathscr{E}(k)$}
%***************************************************************************
It is known that the Serre $C^*$-algebra of an elliptic curve $\mathscr{E}(k)$ is isomorphic 
to the $\mathscr{A}_{\theta}$  \cite[Theorem 1.3.1]{N}. 
Moreover, if $k$ is a number field, then $\theta$ is a real quadratic number,
i.e. the irrational root of a quadratic polynomial with integer coefficients,
see  \cite[Theorem 1.4.1]{N}. 
For such a number, the continued fraction (\ref{eq4.4}) must be eventually periodic,
i.e. 
%**********************************************************************
\begin{equation}\label{eq4.5}
\theta=[a_0,\dots, a_N; \overline{b_1,\dots, b_n}],
\end{equation}
%**********************************************************************
where  $(b_1,\dots, b_n)$ is the minimal period of the fraction. 
%****************************************************************************
\begin{corollary}\label{cor4.2}
The period $(b_1,\dots, b_n)$ of the continued fraction (\ref{eq4.5}) is an invariant of the twists
 (\ref{eq4.2}), while $(a_0,\dots,a_N)$ in (\ref{eq4.5}) depend on the twist 
 parameter $t\in k$ in (\ref{eq4.2}).  
\end{corollary}
%******************************************************************************
\begin{proof}
Up to a cyclic permutation,   the  period $(b_1,\dots, b_n)$  is the Morita invariant 
of the algebra $\mathscr{A}_{\theta}$,  see  remark \ref{rmk4.1}.  The corollary 
\ref{cor4.2} follows from the corollary \ref{cor1.2}. 
\end{proof}

%**********************
%\section*{Declarations}
%***************************
%The declarations are not applicable. 

\bibliographystyle{amsplain}

%**********************************************************

\end{document}